\documentclass[10pt]{amsart}
\usepackage{fancyhdr}
\usepackage{stmaryrd}
\usepackage{calrsfs}

\usepackage{amsmath}
\usepackage[nospace,noadjust]{cite}
\usepackage{amsfonts}
\usepackage{amssymb,enumerate}
\usepackage{amsthm}
\usepackage{cite}
\usepackage{comment}
\usepackage{color}
\usepackage[all]{xy}
\usepackage{hyperref}
\usepackage{lineno}
\usepackage{tikz-cd}

\newtheorem{theorem}{Theorem}[section]

\newtheorem{question}[theorem]{Question}

\newtheorem{lemma}[theorem]{Lemma}

\newtheorem*{main-theorem}{Main Theorem}

\theoremstyle{definition}

\newtheorem{example}[theorem]{Example}
\numberwithin{equation}{section}

\newcommand{\nn}{\mathbb{N}}
\newcommand{\pp}{\mathbb{P}}
\newcommand{\qq}{\mathbb{Q}}
\newcommand{\rr}{\mathbb{R}}
\newcommand{\zz}{\mathbb{Z}}

\newcommand{\gp}{\mathcal{G}}

\newcommand{\uu}{\mathcal{U}}

\providecommand\ldb{\llbracket}
\providecommand\rdb{\rrbracket}

\hypersetup{
	pdftitle={FD},
	colorlinks=true,
	linkcolor=blue,
	citecolor=cyan,
	urlcolor=wine
}

\keywords{monoid algebra, subatomicity, factorization, finite rank-monoid}

\subjclass[2020]{Primary: 13F15, 13A05, 20M25; Secondary: 06F05, 11Y05, 13G05}

\begin{document}
	
	\mbox{}
    \title{On near atomicity and a characterization of the FF property}

	\author{Jonathan Du}
	\address{Department of Mathematics\\MIT\\Cambridge, MA 02139}
	\email{jndu@mit.edu}

	\author{Felix Gotti}
	\address{Department of Mathematics\\MIT\\Cambridge, MA 02139}
	\email{fgotti@mit.edu}
	
	\author{Leo Hong}
	\address{MIT PRIMES\\MIT\\Cambridge, MA 02139}
	\email{lhong6@charlotte.edu}

\date{\today}

\begin{abstract}	 
	A commutative cancellative monoid is atomic if every nonunit factors into atoms, and an integral domain is atomic if its multiplicative monoid of nonzero elements is atomic. Several weakenings of atomicity have been introduced and studied during the past decade, including near atomicity, almost atomicity, and quasi-atomicity. Although nearly atomic monoids that are not atomic were already known, whether there exist nearly atomic integral domains that are not atomic had remained open. We answer this question affirmatively by constructing an explicit nearly atomic integral domain that is not atomic. We also strengthen the classical Anderson--Anderson--Zafrullah characterization of the finite factorization property by proving that an integral domain is an FFD if and only if it is both nearly atomic and IDF. We conclude by showing that near atomicity cannot be weakened to almost atomicity in this characterization, even within the class of IDF domains.
\end{abstract}
\medskip

\maketitle

%\tableofcontents

\bigskip
%%%%%%%%%%%
%%%%%%%%%%%
\section{Introduction}
\label{sec:intro}
\smallskip

The Fundamental Theorem of Arithmetic (FTA) states that both existence and uniqueness of factorizations are guaranteed in the prototypical integral domain $\zz$. Motivated by this theorem, UFDs and related factorization properties have been systematically investigated for more than a century. The term atomicity refers to the existential condition in the FTA: an integral domain is atomic if every nonzero nonunit can be factored into atoms (also called irreducibles). Atomicity was studied by Grams~\cite{aG74} and Zaks~\cite{aZ82} in connection with ascending chains of principal ideals (see~\cite{GL23,GL23a,BGLZ24} for more recent work in the same direction). The ascent of atomicity to polynomial rings was settled in~\cite{mR93}, while its ascent to monoid algebras was settled in~\cite{CG19} (see also \cite{GR25}). The first systematic study of atomicity in the setting of integral domains was carried out by Anderson, Anderson, and Zafrullah in~\cite{AAZ90}, where the authors introduced and investigated the bounded factorization (BF) and the finite factorization (FF) properties, two natural generalizations of the unique factorization (UF) property, which has received a great deal of attention since then.
\smallskip

In this paper, we study a weaker notion of atomicity, known as near atomicity, and then we connect it to the FF property. Following Lebowitz-Lockard~\cite{nL19}, we say that an integral domain $R$ is nearly atomic provided that there exists a nonzero $s \in R$ such that every nonzero element in the principal ideal $sR$ is atomic in $R$ (i.e., factors into irreducibles). It follows directly from the definitions that every atomic domain is nearly atomic. In~\cite{nL19}, Lebowitz-Lockard studies near atomicity in connection with further subatomic properties, including the quasi-atomic and the almost atomic properties, both introduced by Boynton and Coykendall in~\cite{BC15} in their study of the graph of divisibility of an integral domain. Near atomicity was subsequently considered in the more general setting of monoids by Gotti and Vulakh~\cite{GV23} and then by Liu, Rodriguez, and Tirador~\cite{LRT24}.
\smallskip

Following~\cite{BC15}, we say that an integral domain $R$ is quasi-atomic (resp., almost atomic) provided that for each nonzero element $r \in R$ there exists an element (resp., an atomic element) $s \in R$ such that $rs$ is atomic in $R$. The ascent of almost and quasi-atomicity to polynomial semidomains was first studied by Polo and the second author in~\cite{GP23}, and the ascent of the same two properties to the more general case of monoid semidomains was more recently studied by Gonzalez, Panpaliya, and the second author~\cite{GGP25}. It is not difficult to prove that every nearly atomic domain is almost atomic, while it follows directly from the definitions that every almost atomic domain is quasi-atomic. For more information about almost and quasi-atomicity in the setting of integral domains, the interested reader can consult Section~10 of the recent survey \cite{CG24}.
\smallskip

The first main goal of this paper is to fill a gap in the literature by providing an explicit construction of a nearly atomic domain that is not atomic. The idea is to build the domain in stages inside a larger ambient group algebra. At each stage, we add carefully chosen elements so that multiplying by one fixed monomial forces the elements already constructed to factor into irreducibles, while a separate family of monomials remains outside all such factorizations and prevents the whole domain from being atomic. This construction shows that near atomicity is genuinely weaker than atomicity for integral domains. Although near atomicity was studied for integral domains~\cite{nL19} and for submonoids of rank-$2$ abelian groups~\cite{LRT24}, the question of whether there exist integral domains that are nearly atomic but not atomic had remained open. The implications among the atomic properties relevant to this paper is shown Figure~\ref{fig:atomicity diagram}.
\begin{center}
	\begin{figure}[h]
		\begin{tikzcd} %[cramped]
			\textbf{ Atomicity (A)}   \arrow[r, Rightarrow] \arrow[red, r, Leftarrow, "/"{anchor=center,sloped}, shift left=1.7ex] 
			& \textbf{ Near Atomicity (NA)} \arrow[r, Rightarrow] \arrow[r, Leftarrow, "/"{anchor=center,sloped}, shift left=1.7ex] 
			& \textbf{ Almost Atomicity (AA)}
		\end{tikzcd}
		\caption{The diagram shows the implications among atomicity and the two weaker properties considered here: near atomicity and almost atomicity. The first marked reverse implication (in red) is one of the two main results of this paper.}
		\label{fig:atomicity diagram}
	\end{figure}
\end{center}

We say that an integral domain (resp., a commutative monoid) has the IDF property provided that every nonzero element of the domain (resp., element of the monoid) has only finitely many irreducible divisors up to associates. The IDF property was first studied by Grams and Warner~\cite{GW75} in 1975. The ascent of the IDF property to polynomial extensions was one of the two open problems left in~\cite{AAZ90} in 1990 (it was answered negatively by Malcolmson and Okoh \cite[Theorem~2.5]{MO09} in 2009). Further work on the ascent of the IDF property to polynomial extensions has been carried out by Eftekhari and Khorsandi~\cite{EK18}, by Gotti and Zafrullah~\cite{GZ23} and, more recently, by Gonzalez and Panpaliya~\cite{GP25}. 
\smallskip

The other main result of this paper is a characterization of the finite factorization (FF) property in terms of near atomicity. The FF property was introduced and investigated in the first systematic study of factorizations in integral domains by Anderson, Anderson, and Zafrullah~\cite{AAZ90}. In the same paper, the authors proved that, for an integral domain, being both atomic and IDF is equivalent to satisfying the FF property. The FF property, along with this characterization, was extended to the larger class of cancellative commutative monoids by Halter-Koch~\cite{fHK92}. The FF property has been the subject of various papers in recent literature, including~\cite{BHL17,fG19,DG25,JKK24}. Here we sharpen the Anderson--Anderson--Zafrullah characterization by proving that every nearly atomic IDF domain satisfies the FF property. Thus, in the presence of the IDF property, near atomicity already forces atomicity and finite factorization.
Figure~\ref{fig:atomicity+IDF diagram} records this improvement and also indicates the limit of the result: replacing near atomicity by almost atomicity is not sufficient in general.
\begin{center}
	\begin{figure}[h]
		\begin{tikzcd} [cramped] 
			\textbf{ FF }    \arrow[r, Leftrightarrow] 
			&\textbf{ [A + IDF] } \arrow[r, Rightarrow] \arrow[red, r, Leftarrow, shift left=1.7ex] 
			& \textbf{ [NA + IDF] } \arrow[r, Rightarrow] \arrow[red, r, Leftarrow, "/"{anchor=center,sloped}, shift left=1.7ex] 
			& \textbf{ [AA + IDF] }
		\end{tikzcd}
		\caption{The diagram shows the implications obtained by combining the IDF property with atomicity, near atomicity, and almost atomicity. The first marked reverse implication (in red) is one of the two main results of this paper, while the second marked reverse implication fails in general.}
		\label{fig:atomicity+IDF diagram}
	\end{figure}
\end{center}

This paper is organized as follows. In Section~\ref{sec:background}, we review the notation, terminology, and established results needed to make the paper as self-contained as possible. In Section~\ref{sec:non-atomic nearly atomic domain}, we construct a nearly atomic domain that is not atomic. Finally, in Section~\ref{sec:a characterization of FFDs via near atomicity}, we characterize finite factorization domains as nearly atomic IDF domains, and we conclude with an example illustrating that an almost atomic IDF domain need not satisfy the FF property.

\bigskip
%%%%%%%%%%%
%%%%%%%%%%%
\section{Background}
\label{sec:background}

\smallskip
%%%%%%%%%%%%%%%%
\subsection{General Notation}
\smallskip

Let $\zz$, $\qq$, and $\rr$ be the sets of integers, rational numbers, and real numbers, respectively. In addition, let $\pp$, $\nn$, and $\nn_0$ be the sets of primes, positive integers, and nonnegative integers, respectively. For $b,c \in \zz$ with $b \le c$, we let $\ldb b, c \rdb$ be the discrete interval from $b$ to $c$:
\[
	\ldb b,c \rdb := \{m \in \zz : b \le m \le c\}.
\]
For a set $R$ consisting of real numbers and $s \in \rr$, we set $R_{\ge s} := \{r \in R : r \ge s\}$, and we define $R_{> s}$ similarly. Let $A$ be an abelian group, and let $S$ be a set. We let $A^S$ denote the set of all functions $S \to A$ with domain $S$ and codomain $A$, which is an abelian group under standard addition of functions. In addition, we define the \emph{support} of $\alpha \in A^S$ to be
\[
	\text{supp} \, \alpha := \{s \in S : \alpha(s) \neq 0 \},
\]
and we say that $\alpha$ is \emph{finitely supported} provided that $|\text{supp} \, \alpha| < \infty$. The subset consisting of all finitely supported functions of $A^S$ is a subgroup of $A^S$, which is denoted by $A^{(S)}$. When $S = \nn$, we often write a function $\alpha \colon S \to A$ in $A^{S}$ as a sequence $(\alpha_n)_{n \ge 1}$, where $\alpha_n := \alpha(n) \in A$ for every $n \in \nn$. Accordingly, the sequences in $A^{(\nn)}$ are called \emph{finitely supported} sequences.

\medskip
%%%%%%%%%%%%%%%%%%%
\subsection{Commutative Monoids}

Throughout this paper, the term \emph{monoid} always refers to a cancellative and commutative semigroup with an identity element. Unless otherwise specified, monoids are written multiplicatively; however, submonoids of totally ordered groups such as $\rr$ and $\qq$ will be written additively. A submonoid of $\rr_{\ge 0}$ is called a \emph{positive monoid}. Let $M$ be a monoid. The group of invertible elements of $M$, also called units, is denoted by $M^\times$ (or $\uu(M)$). The monoid $M$ is said to be \emph{reduced} if~$M^\times$ is the trivial group. 
\smallskip

The \emph{Grothendieck group} of $M$ is the abelian group $\gp(M)$, unique up to isomorphism, that satisfies the following property: any abelian group containing an isomorphic image of $M$ also contains an isomorphic image of $\gp(M)$. If $\gp(M)$ is a torsion-free group then we say that $M$ is a \emph{torsion-free} monoid. The \emph{rank} of $M$ is the rank of $\gp(M)$ as a~$\zz$-module or, equivalently, the dimension of the vector space $\qq \otimes_\zz \gp(M)$ over~$\qq$. If~$S$ is a subset of~$M$ then we let $\langle S \rangle$ denote the smallest submonoid of~$M$ containing~$S$.
\smallskip

For $b,c \in M$, we say that $c$ \emph{divides} $b$ if there exists $d \in M$ such that $b = cd$, in which case we write $c \mid_M b$. If for some $b,c \in M$ both relations $b \mid_M c$ and $c \mid_M b$ hold then $b$ and $c$ are called \emph{associates}. A submonoid $N$ of~$M$ is said to be \emph{divisor-closed} if $a \in N$ and $b \mid_M a$ together imply $b \in N$.
\smallskip

An element $a \in M \setminus M^\times$ is called an \emph{atom} provided that for any elements $b,c \in M$ the equality $a = bc$ guarantees that $\{b,c\}$ has nonempty intersection with $M^\times$. We let $\mathcal{A}(M)$ denote the set of atoms of $M$. We say that $M$ is an \emph{irreducible-divisor-finite} (IDF) monoid, or that $M$ has the \emph{IDF property}, if every element of $M$ is divisible only by finitely many atoms of $M$ up to associates. The IDF property was introduced and first studied by Grams and Warner~\cite{GW75} in 1975.
\smallskip

We say that an element $a \in M$ is \emph{atomic} if either $a$ belongs to $M^\times$ or $a$ factors into finitely many atoms. Following Cohn~\cite{pC68}, we say that the monoid $M$ is atomic if every element of $M$ is atomic. A monoid with no atoms is called \emph{antimatter}. The monoid $M$ satisfies the \emph{ascending chain condition on principal ideals} (ACCP) if every ascending chain of principal ideals of $M$ eventually stabilizes; equivalently, $M$ contains no infinite sequence $(a_n)_{n \ge 1}$ of pairwise nonassociate elements such that $a_{n+1} \mid_M a_n$ for every $n \in \nn$. It is well known that every monoid that satisfies the ACCP is atomic. We proceed to introduce two atomicity conditions weaker than atomicity. For $s \in M$, the set $sM := \{sm : m \in M\}$ is the principal ideal generated by $s$. We say that $M$ is \emph{nearly atomic} if there exists $s \in M$ such that every element of $sM$ is atomic in $M$. On the other hand, we say that $M$ is \emph{almost atomic} if for every $b \in M$ there exists an atomic element $a \in M$ such that $ab$ is also atomic in $M$. It follows from the definitions that every atomic monoid is nearly atomic, and one can verify that every nearly atomic monoid is almost atomic.
\smallskip

Let $\mathsf{Z}(M)$ denote the free commutative monoid on the set $\mathcal{A}(M/M^\times)$, and let $\pi \colon \mathsf{Z}(M) \to M/M^\times$ denote the unique monoid homomorphism that fixes every element of the set $\mathcal{A}(M/M^\times)$. The elements of $\mathsf{Z}(M)$ are called \emph{factorizations}. For each $m \in M$, we set
\[
	\mathsf{Z}(m) := \pi^{-1} (m M^\times),
\]
and we say that $M$ is a \emph{unique factorization monoid} (UFM) if $|\mathsf{Z}(m)| = 1$ for all $m \in M$. More generally, we say that $M$ is a \emph{finite factorization monoid} (FFM) if $1 \le |\mathsf{Z}(m)| < \infty$ for all $m \in M$. Observe that every UFM is an FFM. If $M$ is a UFM (resp., FFM) then we often say that $M$ has the \emph{UF property} (resp., \emph{FF property}). Interested readers can consult~\cite{AG22} for a recent survey on the FF property (and the BF property) in the setting of integral domains.

\medskip
%%%%%%%%%%%%%%%%%%%%%%%%%%%%%%%%%%%%%%%%%%%%%%%%%
\subsection{Integral Domains and Monoid Algebras} 

Let $R$ be an integral domain. We denote the multiplicative monoid of~$R$ by $R^*$ and the group of units of $R$ by $R^\times$. We say that $R$ is an \emph{atomic} (resp., a \emph{nearly atomic}, an \emph{almost atomic}, an \emph{IDF}, an \emph{FF}) domain provided that $R^*$ is an atomic (resp., a nearly atomic, an almost atomic, an IDF, an FF) monoid.
\smallskip

Let $M$ be a monoid, and let $R$ be a commutative ring with identity element~$1$. Then the abelian group $R^{(M)}$ consisting of all finitely supported functions $M \to R$ is a free $R$-module on the set $\{\delta(m,-) : m \in M\}$, where $\delta \colon M \times M \to \{0,1\}$ is the Kronecker delta function on~$M$. It is convenient to represent the functions in $R^{(M)}$ in a polynomial-like fashion: let $x$ be an indeterminate, and set $x^m := \delta(m,-) : M \to R$, which is the function sending $m \mapsto 1$ and $m' \mapsto 0$ for all $m' \neq m$. Then $R^{(M)}$, also denoted by $R[x;M]$, is the free $R$-module with basis $\{x^m : m \in M \}$. We can and will consider $R[x;M]$ as an $R$-algebra under the multiplication determined by the relations $x^{m_1}x^{m_2} = x^{m_1+m_2}$ (for all $m_1, m_2 \in M$) extended according to the distributive law. In this case, we call the elements of $R[x;M]$ \emph{polynomial expressions}. When considered as an $R$-algebra, $R[x;M]$ is called the \emph{monoid algebra} of $M$ over $R$. Moreover, if $M$ is an abelian group then $R[x;M]$ is called the \emph{group algebra} of~$M$ over~$R$. The monoid algebra $R[x;M]$ is an integral domain if and only if~$M$ is torsion-free (besides cancellative and commutative) and $R$ is an integral domain \cite[Theorem~8.1]{rG84}; in this case, $R[x;M]$ is also called the \emph{monoid domain} of $M$ over $R$.
\smallskip

Since this paper only considers monoid algebras that are integral domains, let us fix an integral domain $R$ and a torsion-free monoid $M$ so that the monoid algebra $R[x;M]$ is an integral domain. Following Gilmer's notation, we denote $R[x;M]$ simply by $R[M]$. As any nonzero polynomial expression $f \in R[M]$ has nonempty support, namely, $\text{supp} \, f = \{s_1, \dots, s_n\}$ for some pairwise distinct $s_1, \dots, s_n \in M$, we can write $f$ as follows:
\begin{equation} \label{eq:generic element in a monoid algebra}
	f := c_1 x^{s_1} + \dots + c_n x^{s_n}
\end{equation}
for some unique nonzero coefficients $c_1, \dots, c_n \in R$. If $\text{supp} \, f$ is a singleton then we refer to $f$ as a \emph{monomial}. It is well known that the group of units of $R[M]$ is the set of invertible monomials of $R[M]$:
\[
	R[M]^\times = \{ux^e : (u,e) \in R^\times \times \uu(M) \}.	
\]
It was established by Levi~\cite{fL13} that an abelian group $A$ admits a total order compatible with its operation if and only if $A$ is torsion-free. As a consequence of this result, it follows that a (cancellative and commutative) monoid admits a compatible total order $\preceq$ if and only if it is torsion-free. Hence we can assume that $M$ is endowed with a total order compatible with its monoid operation in the sense that order relations are preserved under the addition operation: for all $a,b,c \in M$, if $a \prec b$ then $a+c \prec b+c$. In this case, we can extend the notions of order and degree from standard polynomial rings to monoid algebras:
\[
	\text{ord} \, f :=  \min \, \text{supp} \, f \quad  \text{ and } \quad \deg f := \max \, \text{supp} \, f
\]
are called the \emph{order} and \emph{degree} of $f$, respectively. In addition, for all nonzero $f,g \in R[M]$, the following identities hold: $\text{ord} \, fg = (\text{ord} \, f) + (\text{ord} \, g)$ and $\deg fg = (\deg f) + (\deg g)$.
\smallskip

%When $R[M]$ is an integral domain, it follows from~\cite[Proposition 8.3]{GP74} that $\dim R[M] \ge 1 + \dim R$. In addition, when $R$ is Noetherian, it follows from \cite[Corollary~2]{jO88} that $\dim R[M] = \dim R + \text{rank} \, M$. As a result, the one-dimensional monoid algebras that are integral domains are precisely the monoid algebras of rank-one torsion-free monoids over fields. Background information on monoid algebras $R[M]$, emphasizing on the ascent of algebraic properties from the pair $(M, R)$ to $R[M]$ and including the most significant progress on the same subject until 1984, can be found in Gilmer's book~\cite{rG84}.

%\smallskip
%%%%%%%%%%%%%%%%%%%%%%%%%%%%
%\section{Classes of Non-Atomic Nearly Atomic Domains}
%
%This first section of content is devoted to the construction of the first known class of nearly atomic domains that are not atomic. These integral domains are obtained inside a group algebra by taking the nested union of certain subrings. %The second will be obtained by localizing a monoid domain, following the classical Grams' construction of the first atomic domain not satisfying the ACCP. 

\bigskip
%%%%%%%%%%%%%%%%%%%%%%%%%%%%%%%%%%
%%%%%%%%%%%%%%%%%%%%%%%%%%%%%%%%%%
\section{A Class of Nearly Atomic Domains that Are not Atomic}
\label{sec:non-atomic nearly atomic domain}

This first section of content is devoted to the construction of the first known class of nearly atomic domains that are not atomic. These integral domains are obtained inside a group algebra by taking the nested union of certain subrings. First, we need to construct a class of auxiliary non-atomic positive monoids, which will play an essential role later.

\medskip
%%%%%%%%%%%%%%%%%%%%%%%%%%%%%%%%%%%%%%%%%%%%%%%%%%%
\subsection{A Class of Non-Atomic Positive Monoids}

Fix a non-algebraic $r \in \rr_{> 0}$, and then let $A$ be a countable subset of $\rr$ such that $A \cup \{r\}$ is algebraically independent over $\qq$. We let $\nn_0 A$ denote the additive submonoid of $\rr$ generated by $A$ (that is, $\nn_0 A = \langle A \rangle$), and we let $\zz A$ denote the Grothendieck group of $\nn_0 A$ inside~$\rr$. Since $A$ is algebraically independent over $\qq$, the monoid $\nn_0 A$ and the abelian group $\zz A$ are both free.
\smallskip

Fix $\epsilon \in \rr_{> 0}$ such that $\epsilon < r/4$. After replacing $a$ by a suitable element of $\qq a$ (for all $a \in A$), we can further assume that
\begin{equation} \label{eq:subset A}
	A \subset \Big(\frac{r}2 - \epsilon, \frac{r}2 + \epsilon \Big).
\end{equation}
Let $e \colon \nn_0 \times \nn \to A \cup \{0\}$ be a function such that $e(0,k) = 0$ for every $k \in \nn$ and whose restriction $e \colon \nn \times \nn \to A$ is a bijection, and then consider the positive monoid
\begin{equation} \label{eq:subset B}
	E := \nn_0 A + \nn_0 B, \quad \text{where} \quad B := \big\{r - e(n,k) : (n,k) \in \nn^2 \big\}.
\end{equation}

\begin{lemma} \label{lem:Claim 1}
	Let the sets $A$ and $B$ and the monoid $E$ be as in \eqref{eq:subset A} and~\eqref{eq:subset B}. Then $E$ is an atomic monoid with $\mathcal{A}(E) = A \cup B$.
\end{lemma}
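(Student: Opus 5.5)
Since $E$ is generated by $A \cup B$, every element of $E$ is a finite sum of elements of $A \cup B$. It therefore suffices to show that every element of $A \cup B$ is an atom, because then $\mathcal{A}(E) = A \cup B$ and $E$ is atomic. Note first that $A \cup B \subset E \setminus \{0\}$, and every nonzero element of $E$ is at least $\min$ of the generators. By \eqref{eq:subset A}, $A \subset (r/2-\epsilon, r/2+\epsilon)$. Since $e(n,k) \in A$, we also get $B \subset (r/2-\epsilon, r/2+\epsilon)$. So every element of $A \cup B$ lies in the interval $(r/2-\epsilon, r/2+\epsilon)$ with $\epsilon < r/4$. In particular every generator exceeds $r/4$, and $E$ is reduced, since it is a positive monoid.

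The main step is a size argument. Suppose $g \in A \cup B$ decomposes as $g = x + y$ with $x, y \in E$ both nonzero. Each nonzero element of $E$ is a sum of at least one generator, hence is greater than $r/2 - \epsilon > r/4$. So $x + y$ is a sum of at least two generators, and
\[
	g = x + y > 2\Big(\frac r2 - \epsilon\Big) = r - 2\epsilon > \frac r2 + \epsilon,
\]
where the last inequality uses $\epsilon < r/4$, i.e.\ $r - 2\epsilon > r/2 + \epsilon \iff r/2 > 3\epsilon$. This last condition needs $\epsilon < r/6$, which is stronger than what is given. So the bare size argument may fall short. If $\epsilon < r/4$ only, I would instead combine size with algebraic independence, as in the next paragraph.

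For the refined argument, I would write $g$ as a sum of $m \ge 2$ generators, $g = \sum_{a} \alpha_a a + \sum_{(n,k)} \beta_{n,k}(r - e(n,k))$. I would then compare the coefficient of $r$ on both sides, using that $A \cup \{r\}$ is $\qq$-linearly independent, which follows from algebraic independence. If $g \in A$, the coefficient of $r$ on the left is $0$, so every $\beta_{n,k} = 0$. Then $g$ is a sum of elements of the free basis $A$, which forces $m = 1$. If $g = r - e(n_0,k_0) \in B$, the coefficient of $r$ forces $\sum \beta_{n,k} = 1$, so exactly one $B$-term $r - e(n,k)$ appears. Comparing the coefficients of elements of $A$ then gives
\[
	-e(n_0,k_0) = \sum_a \alpha_a a - e(n,k).
\]
Since the $\alpha_a$ are nonnegative, the elements of $A$ are independent, and $e$ is injective, this forces $e(n,k) = e(n_0,k_0)$ and all $\alpha_a = 0$. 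Hence $m = 1$. This coefficient comparison is the real content of the proof, and it makes the size estimate unnecessary. Indeed the only obstacle I see is noticing that linear independence of $A \cup \{r\}$, rather than magnitudes, is what rules out nontrivial decompositions. I would present the proof via this independence argument, using the interval condition only to note that $E$ is a positive monoid, hence reduced.
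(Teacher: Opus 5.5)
Your final argument is correct and is essentially the paper's proof. Both proofs use the $\qq$-linear independence of $A \cup \{r\}$ to force the number of $B$-summands to be zero (for $g \in A$) or one (for $g \in B$), and then use the freeness of $\nn_0 A$ to show the decomposition is trivial. In the $B$ case you compare $A$-coefficients directly, whereas the paper appeals to the already-established inclusion $A \subseteq \mathcal{A}(E)$; this difference is cosmetic. You are also right that the size estimate alone would require $\epsilon \le r/6$, so discarding it is correct.
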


\begin{proof}
	Since $E$ is generated by $A \cup B$, proving that $E$ is atomic with $\mathcal{A}(E) = A \cup B$ amounts to showing that every element of $A \cup B$ is an atom of $E$. First, we show that $A \subseteq \mathcal{A}(E)$. Fix $a \in A$, and suppose that
	\[
		a =  \sum_{i=1}^m a_i + \sum_{i=1}^n (r - a'_i)
	\]
	for some $m,n \in \nn_0$ and $a_1, \dots, a_m, a'_1, \dots, a'_n \in A$. Then
	\[
		nr + \sum_{i=1}^m a_i - \sum_{i=1}^n a'_i - a = 0.
	\]
	The algebraic independence of $A \cup \{r\}$ over $\qq$ implies, in particular, that this set is linearly independent over $\qq$. Thus $n=0$. It follows that $a = \sum_{i=1}^m a_i$ inside the free monoid $\nn_0 A$, and therefore $m=1$ and $a_1=a$. Hence $a$ cannot be written as a sum of two nonzero elements of $E$, so $a \in \mathcal{A}(E)$. Thus, $A \subseteq \mathcal{A}(E)$.

	Now we show that $B \subseteq \mathcal{A}(E)$. Fix $r - a' \in B$, and suppose that
	\[
		r-a' =  \sum_{i=1}^m a_i + \sum_{i=1}^n (r - a'_i)
	\]
	for some $m,n \in \nn_0$ and $a_1, \dots, a_m, a'_1, \dots, a'_n \in A$. Then
	\[
		(n-1)r + \sum_{i=1}^m a_i - \sum_{i=1}^n a'_i + a' = 0.
	\]
	Again using the linear independence of $A \cup \{r\}$ over $\qq$, we obtain $n=1$. Hence
	\[
		a'_1 = a' + \sum_{i=1}^m a_i.
	\]
	Since $a'_1 \in A \subseteq \mathcal{A}(E)$, the last equality forces $\sum_{i=1}^m a_i = 0$ and $a'_1=a'$. Therefore the original expression of $r-a'$ is trivial, and so $r-a' \in \mathcal{A}(E)$. This proves that $B \subseteq \mathcal{A}(E)$, completing the proof.
\end{proof}

Let $Q$ denote $\nn_0\big[1/2]$, which is the positive monoid consisting of all nonnegative dyadic rationals, and observe that $Q$ is a valuation monoid: indeed, for each $q_1, q_2 \in Q$ the divisibility relation $q_1 \mid_Q q_2$ holds if and only if $q_1 \le q_2$. Now consider the positive monoids
\[
    E_0 := \nn_0 r \quad \text{ and } \quad E_n := \big\langle e(n,k), r - e(n,k) : k \in \nn \big\rangle
\]
for every $n \in \nn$. Observe that $E = \sum_{n \in \nn} E_n$. In addition, for each $n \in \nn$, the inclusion $E_0 \subset E_n$ holds because $r = e(n,1) + (r - e(n,1)) \in E_n$. Now define
\begin{equation} \label{eq:def of the monoids M and M_n}
	M := \bigcup_{n \in \nn_0} M_n, \quad \text{ where } \quad M_n := Q + \sum_{j=0}^n E_j
\end{equation}
for every $n \in \nn_0$. It is clear that $M_0 = Q + E_0$ and $M_n = M_{n-1} + E_n$ for every $n \in \nn$, whence $(M_n)_{n \in \nn_0}$ is an ascending chain of positive monoids, and so $M$ is a positive monoid.
Let us take a look at the structure of $M$.

\begin{lemma} \label{lem:M=Q+E=<A U B>}
	For the monoid $M$ in~\eqref{eq:def of the monoids M and M_n}, the following statements hold.
	\begin{enumerate}
		\item $M = Q \oplus E$, and so $Q$ is a divisor-closed submonoid of $M$.
		\smallskip
		
		\item $M$ is a reduced monoid with $\mathcal{A}(M) = A \cup B$, and so $M$ is not atomic.
	\end{enumerate}
\end{lemma}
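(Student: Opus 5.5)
The plan is to prove~(1) by a direct computation inside $\rr$, using linear independence over $\qq$, and then to derive~(2) from~(1) together with Lemma~\ref{lem:Claim 1}.

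\emph{Step 1: $M = Q + E$.} Since $E_0 \subseteq E_n$ for every $n \in \nn$, we have $\sum_{j=0}^n E_j = \sum_{j=1}^n E_j \subseteq E$, so each $M_n \subseteq Q + E$. Conversely, every element of $E = \sum_{n \in \nn} E_n$ involves only finitely many $E_n$, so it lies in some $\sum_{j=0}^n E_j$. Taking the union over $n$ gives $M = Q + E$.

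\emph{Step 2: the sum is direct.} I expect this to be the only delicate point. Note that $\gp(Q) = \zz[1/2] \subseteq \qq$, while $\gp(E) \subseteq \zz A + \zz r$. Every element of $A \cup \{r\}$ is transcendental, and the set is algebraically independent, so $\{1\} \cup A \cup \{r\}$ is linearly independent over $\qq$. Indeed, a nontrivial relation $c_0 + \sum c_i a_i + c\, r = 0$ with rational coefficients would be a nonzero polynomial relation of degree at most one. Hence $\qq \cap (\zz A + \zz r) = \{0\}$. Now suppose $q + e = q' + e'$ with $q,q' \in Q$ and $e,e' \in E$. Then $q - q' = e' - e$ lies in this intersection, so $q = q'$ and $e = e'$. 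Thus $M = Q \oplus E$.

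\emph{Step 3: $Q$ is divisor-closed.} Suppose $q \in Q$ and $d \mid_M q$. Write $d = q_1 + e_1$ and $q = d + (q_2 + e_2)$. Uniqueness of the decomposition gives $e_1 + e_2 = 0$. Since $E \subseteq \rr_{\ge 0}$, this forces $e_1 = 0$, so $d \in Q$.

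\emph{Step 4: part~(2).} The monoid $M$ is reduced because it is a positive monoid, so $0$ is its only unit. Consider an element $q + e$ with $q \neq 0$. If $e \neq 0$, then $q + e$ is the sum of the two nonzero elements $q$ and $e$. If $e = 0$, then $q = q/2 + q/2$ with $q/2 \in Q$. In either case $q + e$ is not an atom, so every atom of $M$ lies in $E$.

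Next I will show that for $e \in E$, every decomposition $e = (q_1 + e_1) + (q_2 + e_2)$ in $M$ has $q_1 + q_2 = 0$, by the uniqueness from Step~2. Hence $q_1 = q_2 = 0$ by positivity. It follows that $e$ is an atom of $M$ exactly when it is an atom of $E$. By Lemma~\ref{lem:Claim 1}, this gives $\mathcal{A}(M) = A \cup B$.

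Finally, $M$ is not atomic. The element $1/2 \in Q$ is nonzero, and by Step~3 all of its divisors lie in $Q$. The monoid $Q$ contains no atoms of $M$ (indeed $Q$ is antimatter), so $1/2$ has no atom divisors at all and cannot be factored into atoms.
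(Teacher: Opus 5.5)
Your proposal is correct and follows essentially the same route as the paper: the union computation plus linear independence over $\qq$ for the direct sum in~(1), and then comparison of $Q$-components together with Lemma~\ref{lem:Claim 1} in~(2). Your write-up is slightly tighter in two places. First, you explicitly use linear independence of $\{1\} \cup A \cup \{r\}$, which is what excluding nonzero rationals from $\qq A + \qq r$ actually requires; the paper cites only $A \cup \{r\}$. Second, you obtain $\mathcal{A}(M) = \mathcal{A}(E)$ directly from the divisor-closedness of $E$ in $M$, whereas the paper rules out the elements of $M_0 = Q + \nn_0 r$ from the generating set $M_0 \cup A \cup B$.
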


\begin{proof}
	(1) As $E = \sum_{n \in \nn} E_n$, and as $E_0 \subset E$, it follows from~\eqref{eq:def of the monoids M and M_n} that
	\begin{equation} \label{eq:M in terms of A cup B}
		M = \bigcup_{n \in \nn_0} \bigg( Q + \sum_{j=0}^n E_j \bigg) = Q + \sum_{n \in \nn} E_n = Q+E.
	\end{equation}
	Observe that $E = \nn_0 A + \nn_0(r - A) \subset \qq A + \qq r$. Since $A \cup \{ r \}$ is algebraically independent over $\qq$, it is linearly independent over $\qq$. Thus no nonzero rational number can belong to $\qq A + \qq r$, and so $Q \cap E = \{0\}$. It follows that each element of $M$ has a unique expression as $q+e$ with $(q,e) \in Q \times E$. Hence the sum of the monoids $Q$ and $E$ is direct, so $M = Q \oplus E$. As a result, $Q$ is divisor-closed in $M$: if $q \in Q$ and $q=m+m'$ for some $m,m' \in M$ then the $E$-components of $m$ and $m'$ must both be zero, and so $m,m' \in Q$.
	\smallskip
	
	(2) As $M = Q \oplus E$, it is enough to verify that both $Q$ and $E$ are reduced. The monoid $Q$ is reduced because it is a Puiseux monoid, that is, a submonoid of $\qq_{\ge 0}$. The monoid $E$ is also reduced because every element of $A \cup B$ is positive, and so no two nonzero elements of $E$ can sum to $0$. Therefore $M$ is reduced.
	
	It follows from Lemma~\ref{lem:Claim 1} that $E = \langle A \cup B\rangle$. The inclusion $E_0 \subset E$, along with~\eqref{eq:M in terms of A cup B}, implies that $M = Q + E = M_0 + \langle A \cup B \rangle$. Thus, $M$ is generated by $M_0 \cup A \cup B$, and because $M$ is reduced, every atom of $M$ must belong to this generating set. We next show that no element of $M_0 = Q + \nn_0 r$ is an atom of $M$. Indeed, take $q+nr \in M_0$ with $q \in Q$ and $n \in \nn_0$. If $q>0$ then
	\[
		q+nr = \frac{q}{2} + \bigg(\frac{q}{2} + nr\bigg)
	\]
	is a decomposition into two nonzero elements of $M$. If $q=0$ and $n>0$, then $nr$ is not an atom: when $n=1$, we have $r=e(1,1)+(r-e(1,1))$, and when $n>1$, we have $nr=r+(n-1)r$. Hence $\mathcal{A}(M) \subseteq A \cup B$.

	For the reverse inclusion, fix $a \in A \cup B$, and write $a = c+d$ for some $c,d \in M  = Q \oplus E$. Since $a \in A \cup B \subset E$, comparing $Q$-components in the direct sum decomposition shows that $c,d \in E$. As $a \in \mathcal{A}(E)$ by Lemma~\ref{lem:Claim 1}, it follows that $\{c,d\} = \{0,a\}$. Hence $A \cup B \subseteq \mathcal{A}(M)$, and therefore $\mathcal{A}(M) = A \cup B$. 
	
	Finally, since $M = Q \oplus E = Q \oplus \langle \mathcal{A}(M) \rangle$, no nonzero element of $Q$ can be a sum of atoms of $M$. Because $Q$ contains nonzero elements, $M$ is not atomic.
\end{proof}

\medskip
%%%%%%%%%%%%%%%%%%%%%%%%%%%%%%%%%%%%%%%%%%%%%%%%%%%%%%%%%%%%%%
\subsection{Nested Sequence of Subrings Inside a Group Algebra}

Recall that $\zz^{(\nn)}$ denotes the abelian group consisting of all the finitely supported integer-valued sequences $(\gamma_k)_{k \ge 1}$. Let the elements of the set $A \cup \{r\}$ be algebraically independent positive real numbers as constructed in the previous section. Also, let the monoid $Q$ and the sequence of monoids $(E_n)_{n \ge 0}$ be defined as in the previous section. For this section, it is convenient to set $N := Q \oplus E$, where $E = \sum_{n \in \nn} E_n$, and
\[
	A = \bigcup_{n \in \nn} A_n \quad \text{ where } \quad A_n := \{e(n,k) : k \in \nn \}
\]
for every $n \in \nn$. Moreover, define $V_{-1} := \{0\}$ and
\[
	V := \bigcup_{n \in \nn_0} V_{\le n}, \quad \text{ where } \quad V_{\le n} := \qq \oplus \qq r \oplus \sum_{j=1}^n \qq A_j
\]
for every $n \in \nn_0$. From the fact that $A \cup \{r\}$ is algebraically independent over $\qq$, we obtain that $V_{\le n}$ (resp., $V$) is a vector space over $\qq$ with basis $\{1,r\} \cup \bigcup_{j=1}^n A_j$ (resp., $\{1,r\} \cup A$). Observe that $V_{\le 0} = \qq \oplus \qq r$. Finally, for every subring $S$ of $\zz[\rr]$, we let $\deg S$ denote the additive submonoid of $\rr$ consisting of the $\zz[\rr]$-degrees of all nonzero polynomial expressions of $\zz[\rr]$ that belong to~$S$. 

We shall use the following elementary degree-control observation throughout the construction. Suppose that, for some $n \in \nn_0$, we have a countable subring $R_n$ of $\zz[N]$ such that $x^r \in R_n$, every exponent occurring in a nonzero element of $R_n$ belongs to $V_{\le n}$, and $M_n := \deg R_n$. Let $(f_{n,k})_{k \ge 1}$ be a sequence whose underlying set is $R_n \setminus \{0\}$, and set
\[
	U_k := x^{e(n+1,k)} \quad \text{and} \quad W_k := f_{n,k}x^{r-e(n+1,k)}
\]
for every $k \in \nn$. If
\[
	R_{n+1} := R_n[U_k,W_k : k \in \nn]
\]
and $d(n+1,k) := \deg f_{n,k} + r - e(n+1,k)$, then
\begin{equation} \label{eq:degree-control-step}
	\deg R_{n+1} = M_n + \big\langle e(n+1,k), d(n+1,k) : k \in \nn \big\rangle.
\end{equation}
Indeed, every polynomial expression of $R_{n+1}$ is a finite sum of terms of the form
\[
	h \prod_{k \in F} U_k^{\alpha_k} W_k^{\beta_k},
\]
where $F \subset \nn$ is finite, $h \in R_n$, and $\alpha_k,\beta_k \in \nn_0$. Since $U_kW_k = x^r f_{n,k} \in R_n$, we can repeatedly absorb common powers of $U_k$ and $W_k$ into the coefficient $h$. Thus, every polynomial expression of $R_{n+1}$ can be written as a finite sum
\[
	\sum_{\gamma \in \zz^{(\nn)}} h_\gamma \prod_{k \in \nn} U_k^{\gamma_k^+} W_k^{\gamma_k^-},
\]
where $h_\gamma \in R_n$, $\gamma=(\gamma_k)$ runs over finitely many elements of $\zz^{(\nn)}$, and $\gamma_k^+ := \max\{\gamma_k,0\}$ and $\gamma_k^- := \max\{-\gamma_k,0\}$. For distinct $\gamma$, the corresponding summands have disjoint supports: the coefficient of $e(n+1,k)$ in every exponent occurring in the $\gamma$-summand is exactly $\gamma_k$. Hence cancellation cannot occur between different $\gamma$-summands. The degree of a nonzero $\gamma$-summand is
\[
	\deg h_\gamma + \sum_{k \in \nn} \gamma_k^+ e(n+1,k) + \sum_{k \in \nn} \gamma_k^- d(n+1,k),
\]
which belongs to the right-hand side of~\eqref{eq:degree-control-step}. This proves the containment ``$\subseteq$'' in~\eqref{eq:degree-control-step}, and the reverse containment follows because the displayed generators are the degrees of elements of $R_{n+1}$. The same normal form also shows that every exponent occurring in an element of $R_{n+1}$ belongs to $V_{\le n+1}$.
\smallskip

Now we construct a nested sequence $(R_n)_{n \ge 0}$ of countable subrings of the monoid domain $\zz[N]$. We proceed inductively. Set
\[
	R_0 := \zz[Q+\nn_0 r].
\]
Since $Q+\nn_0r \subset N$, the ring $R_0$ is a countable subring of $\zz[N]$. Also, $x^r \in R_0$, every exponent occurring in a nonzero element of $R_0$ belongs to $V_{\le 0}$, and
\[
	M_0 := \deg R_0 = Q+\nn_0r.
\]
Having constructed $R_n$ and $M_n := \deg R_n$ for some $n \in \nn_0$, let $(f_{n,k}(x))_{k \ge 1}$ be a sequence of pairwise distinct elements whose underlying set is $R_n \setminus \{0\}$. Define
\[
	X_{n+1} := \big\{ x^{e(n+1,k)} : k \in \nn \big\} \quad \text{and} \quad Y_{n+1} := \big\{ f_{n,k}x^{r-e(n+1,k)} : k \in \nn \big\}.
\]
Since $e(n+1,k), r-e(n+1,k) \in E_{n+1} \subset N$ for every $k \in \nn$, both $X_{n+1}$ and $Y_{n+1}$ are subsets of $\zz[N]$. Now set
\[
	R_{n+1} := R_n[X_{n+1} \cup Y_{n+1}]
\]
and
\[
	D_{n+1} := \{d(n+1,k) : k \in \nn\}, \quad \text{where} \quad d(n+1,k) := r - e(n+1,k) + \deg f_{n,k}.
\]
The degree-control observation gives
\begin{equation} \label{eq:M_n}
	M_{n+1} = M_n + \big\langle e(n+1,k), d(n+1,k) : k \in \nn \big\rangle.
\end{equation}
In particular, $M_{n+1} \subseteq M_n + E_{n+1} \subseteq V_{\le n+1}$. This completes the inductive step, and so we have constructed a nested sequence $(R_n)_{n \ge 0}$ of countable subrings of $\zz[N]$ such that $x^r \in R_n$ and $M_n := \deg R_n \subset V_{\le n}$ for every $n \in \nn_0$. Thus,
\begin{equation} \label{eq:NA domain R}
	R := \bigcup_{n \in \nn_0} R_n
\end{equation}
is a countable subring of the monoid domain $\zz[N]$. After setting $M := \deg R$ and $D := \bigcup_{n \in \nn} D_n$, we see that
\begin{equation} \label{eq:M def:monoid of leading exponents of R}
	M = \bigcup_{n \in \nn_0} M_n = M_0 + \langle A \cup D \rangle = Q + \langle A \cup D \rangle.
\end{equation}
The last equality holds because $1 \in R_0$, and so for some $k \in \nn$ we have $f_{0,k} = 1$, which yields $r=e(1,k)+d(1,k) \in \langle A \cup D \rangle$. Our primary goal here is to prove that $R$ is a nearly atomic domain that is not atomic. First, we need to show that $M$ is a nearly atomic monoid that is not atomic, which we do in the following lemma.

\begin{lemma} \label{lem:aux nearly atomic non-atomic monoid}
	Let $M$ be the monoid defined in~\eqref{eq:M def:monoid of leading exponents of R}, that is, the monoid consisting of all the degrees in $\zz[\rr]$ of the nonzero polynomial expressions in~$R$. Then the following statements hold.
	\begin{enumerate}
		\item $\mathcal{A}(M) = A \cup D$.
		\smallskip
		
		\item $M$ is nearly atomic but not atomic.
	\end{enumerate}
\end{lemma}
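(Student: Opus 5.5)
The plan is to work with coordinates relative to the $\qq$-basis $\{1,r\}\cup A$ of $V$, and to build an additive functional $\psi\colon V\to\qq$ that behaves like a length function on $M$. Define $\psi$ on the basis by $\psi(1)=0$, $\psi(r)=2$ and $\psi(a)=1$ for every $a\in A$, and extend it $\qq$-linearly.

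By \eqref{eq:M def:monoid of leading exponents of R}, $M$ is generated by $Q\cup A\cup D$. The first step is to prove, by induction on $n$, that $\psi(m)\ge 1$ for every generator $m$ of $M_n$ that does not lie in $Q$.
\begin{itemize}
\item Clearly $\psi(Q)=0$, $\psi(a)=1$ for $a \in A$, and $\psi(r)=2$.
\item For $d(n+1,k)=r-e(n+1,k)+\deg f_{n,k}$ we get $\psi(d(n+1,k))=1+\psi(\deg f_{n,k})$.
\item Since $\deg f_{n,k}\in M_n$, the induction hypothesis gives $\psi(\deg f_{n,k})\ge 0$, hence $\psi(d(n+1,k))\ge 1$.
\end{itemize}
Consequently $\psi\ge 0$ on $M$. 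Moreover, $\psi(m)=0$ forces $m\in Q$: write $m$ as an element of $Q$ plus generators from $A\cup D$, each of which contributes at least $1$. It follows that $Q$ is divisor-closed in $M$. Since $M\subset\rr_{\ge0}$, the monoid $M$ is reduced.

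For part (1), I first show $\mathcal{A}(M)\subseteq A\cup D$. The monoid $M$ is reduced and generated by $Q\cup A\cup D$. No nonzero $q\in Q$ is an atom, because $q=q/2+q/2$.

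Next I show that every $a\in A$ is an atom. Suppose $a$ is a sum of generators. Since $\psi(a)=1$, exactly one summand $g$ lies outside $Q$, and $\psi(g)=1$.
\begin{itemize}
\item If $g\in D$, then its $r$-coordinate is $\ge 1$: the $r$-coordinates of elements of $M$ are nonnegative by the same induction. But $a$ has $r$-coordinate $0$, so this is impossible.
\item Hence $g=a'\in A$. Comparing coordinates on $\{1\}\cup A$ gives $a'=a$ and that the $Q$-part is $0$.
\end{itemize}

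The main obstacle is showing that $d:=d(n,k)$ is an atom. The key observation is that $\deg f_{n-1,k}\in V_{\le n-1}$, so the $e(n,k)$-coordinate of $d$ is exactly $-1$ and its $A_j$-coordinates vanish for $j>n$. Suppose $d=\sum g_i$ is a sum of generators, and let $N$ be the largest level of a generator appearing in the sum.
\begin{itemize}
\item If $N>n$, the $A_N$-coordinates must cancel. Each $e(N,k')$ receives $+1$ from the generator $e(N,k')$ itself and $-1$ only from $d(N,k')$. So the generators of level $N$ pair up as $e(N,k')+d(N,k')=r+\deg f_{N-1,k'}$.
\item Each such pair can be replaced by an element of $M_{N-1}$ with the same $\psi$-value, which is then expanded into generators of lower level.
\item By descending induction we may therefore assume every summand lies in $M_n$, while the total $\psi$-value $\psi(d)$ is preserved.
\item Now the $e(n,k)$-coordinate forces $d(n,k)$ to appear. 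Each additional pair $e(n,k')+d(n,k')$ would add $\psi\ge 2$, and each other non-$Q$ generator would add $\psi\ge 1$, exceeding $\psi(d)$.
\item So the sum is $d$ plus an element of $Q$ with total $\psi$-value $0$, and comparing the rational coordinate kills that $Q$-part.
\end{itemize}
Reducing an arbitrary decomposition $d=m+m'$ to this situation, by expanding $m$ and $m'$ into generators, shows that $d$ is an atom. In carrying this out I would be careful to track that the rewriting step is a legitimate bookkeeping device on coordinates, rather than literally a factorization of $m$ or $m'$.

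For part (2), non-atomicity follows because $Q$ is divisor-closed, antimatter and nonzero. Any nonzero $q\in Q$ would then have to be a sum of atoms lying in $Q$, and $Q$ has no atoms. For near atomicity I take $s=r$.
\begin{itemize}
\item Any element of $r+M$ has the form $r+q+(\text{sum of atoms})$ with $q\in Q$.
\item Because $x^q\in R_0$, there is $k$ with $f_{0,k}=x^q$. Then $d(1,k)=r-e(1,k)+q$, so $r+q=e(1,k)+d(1,k)$ is a sum of atoms.
\end{itemize}
Hence every element of $r+M$ is atomic, and $M$ is nearly atomic.
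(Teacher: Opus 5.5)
Your treatment of $Q$, of $A$, and of part (2) is correct. Your descending rewriting of top-level pairs $e(N,k')+d(N,k')=r+\deg f_{N-1,k'}$ is also valid: it shows that every element of $M\cap V_{\le n}$ has an expansion into generators of level at most $n$, i.e., $M\cap V_{\le n}=M_n$.

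The gap is the final step for $D$. Reducing an arbitrary decomposition $d=m+m'$ ``to this situation'' does not work as written. In a rewriting step the two members of a pair can come from different sides: $e(N,k')$ from the expansion of $m$ and $d(N,k')$ from that of $m'$. The rewritten sum then no longer splits as $m+m'$, so knowing it is trivial tells you nothing about $m$ or $m'$. Concretely, nothing you check rules out $d(n,k)=e(N,k')+d(N,k')$ with $N>n$ and $r+\deg f_{N-1,k'}=d(n,k)$. Here the pair is replaced by an element of $M_{N-1}$ that can be expanded as the single generator $d(n,k)$, and the result passes both your $e(n,k)$-coordinate test and your $\psi$ test.

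The paper has the same issue at the corresponding point. It simply asserts that, since $d(n,k)\in M_n$, both $v$ and $w$ lie in $M_n$. This is not automatic, because $M_n$ is not divisor-closed in $M$: for instance, $e(n+1,k')$ divides $r+\deg f_{n,k'}\in M_n$.

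What is missing is an argument that no rewriting is ever needed, i.e., that every generator in the expansions of $m$ and $m'$ already has level at most $n$.
\begin{enumerate}
\item Suppose the top level $N$ exceeds $n$. Then some pair $e(N,k')+d(N,k')$ occurs, so $d-r=\deg f_{N-1,k'}+(\text{the remaining generators})\in M$.
\item Since $d-r\in V_{\le n}$, your rewriting gives an expansion of $d-r$ into generators of level at most $n$.
\item Its $e(n,k)$-coordinate is $-1$, so $d(n,k)$ occurs in it. This gives $\psi(d-r)\ge\psi(d(n,k))=\psi(d)$, contradicting $\psi(d-r)=\psi(d)-2$.
\end{enumerate}
Alternatively, require each rewriting step to keep an explicit summand $r$. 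It has level $0$ and $\psi(r)=2$, and it is never rewritten again, so your final $\psi$-count rules out that any rewriting took place.

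Either way $N\le n$, and your level-$n$ count applies to the original split. The combined expansion is $d(n,k)$ together with zeros, so one of $m,m'$ equals $0$. With this step added your proof is complete, and it also supplies the justification the paper's proof omits.
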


\begin{proof}
	(1) The fact that $M$ is reduced follows immediately from the fact that $M$ is a submonoid of $N = Q \oplus E$, which is reduced by Lemma~\ref{lem:M=Q+E=<A U B>}. To verify that $\mathcal{A}(M) \subseteq A \cup D$, first observe that, because $M$ is reduced, every atom of $M$ must belong to any generating set of $M$. Since $M = Q + \langle A \cup D \rangle$, this gives $\mathcal{A}(M) \subseteq Q \cup A \cup D$. Since $Q$ is divisor-closed in $N$, and therefore in $M$, the fact that $Q$ is antimatter ensures that no element of $Q$ can be an atom of $M$. Therefore $\mathcal{A}(M) \subseteq A \cup D$.
	
	For the reverse inclusion, we only need to keep track of the first stage at which certain generators appear. For each $n,k \in \nn$, the element $e(n,k)$ first appears at stage $n$. Similarly,
	\[
		d(n,k) = r - e(n,k) + \deg f_{n-1,k}
	\]
	belongs to $M_n$ but not to $M_{n-1}$: indeed, $\deg f_{n-1,k} \in V_{\le n-1}$, so the coefficient of $e(n,k)$ in the basis expansion of $d(n,k)$ is $-1$, whereas every element of $M_{n-1}$ belongs to $V_{\le n-1}$.
	
	We now show that $A \cup D \subseteq \mathcal{A}(M)$. To verify that $A \subseteq \mathcal{A}(M)$, first observe that no element of the form $d(n,k)+u$, with $u \in M$, can belong to the submonoid $Q \oplus \nn_0 A$: the coefficient of $r$ in $d(n,k)$ is positive, and adding elements of $M$ cannot eliminate all occurrences of generators from $D$. Hence $Q \oplus \nn_0 A$ is divisor-closed in $M$. Since $A = \mathcal{A}(Q \oplus \nn_0 A)$, it follows that $A \subseteq \mathcal{A}(M)$.

	To show that $D \subseteq \mathcal{A}(M)$, fix $n,k \in \nn$ and write $d(n,k) = v+w$ for some $v,w \in M$. Suppose first that neither $v$ nor $w$ belongs to $Q \oplus \nn_0 A$. Then each of $v$ and $w$ has a divisor from $D$, say $d(i,j) \mid_M v$ and $d(i',j') \mid_M w$. Since $d(n,k) \in M_n$, both $v$ and $w$ belong to $M_n$, and so $i,i' \le n$. Projecting onto the $\qq$-span of $A_n$ shows that the only possible negative coefficient at level $n$ on the right must come from exactly one copy of $d(n,k)$. Thus one of $v$ and $w$ has $d(n,k)$ as a divisor, while the other must be~$0$, contradicting the assumption that both lie outside $Q \oplus \nn_0 A$. Hence, after relabeling if necessary, we may assume that $v \in Q \oplus \nn_0 A$.
	
	If $w \ne 0$ and $w \notin Q \oplus \nn_0 A$, then we can take $m,j \in \nn$ such that $d(m,j) \mid_M w$. Thus, we can write
	\begin{equation} \label{eq:D is a set of atoms}
		d(n,k) = v+w = \bigg(q + \sum_{a \in A} c_a a \bigg) + \bigg( d(m,j) + q' + \sum_{a \in A} c'_a a \bigg)
	\end{equation}
	for some $q,q' \in Q$ and two families $\{c_a : a \in A$ and $\{c'_a : a \in A\}$ in $\nn_0^{(A)}$. The preceding stage comparison gives $m=n$. Projecting both sides of~\eqref{eq:D is a set of atoms} onto the $\qq$-span of $A_n$, we see that the unique negative coefficient on the left occurs at $e(n,k)$, while the unique negative coefficient coming from $d(n,j)$ on the right occurs at $e(n,j)$. Hence $j=k$. Substituting $m=n$ and $j=k$ in~\eqref{eq:D is a set of atoms} gives
	\[
		q+q' + \sum_{a \in A} (c_a+c'_a)a = 0.
	\]
	By the linear independence of $\{1\} \cup A$ over $\qq$, we obtain $q=q'=0$ and $c_a=c'_a=0$ for every $a \in A$. Thus, $v=0$ and $w=d(n,k)$. If instead $w \in Q \oplus \nn_0 A$, then the equality $d(n,k)=v+w$ is impossible, because $v+w \in Q \oplus \nn_0 A$ while $d(n,k) \notin Q \oplus \nn_0 A$. Therefore $d(n,k)$ is an atom of $M$, and so $D \subseteq \mathcal{A}(M)$.
	\smallskip
	
	(2) Let us argue now that $M$ is nearly atomic with shift~$r$. Since $M = Q + \langle A \cup D \rangle$, it suffices to prove that $r + Q \subseteq  \langle A \cup D \rangle$. To do so, fix $q \in Q$, and then take $k \in \nn$ such that $f_{0,k} = x^q$. Now observe that
	\[
		r+q = e(1,k) + \big(\deg f_{0,k}(x) + r - e(1,k)\big) = e(1,k) + d(1,k) \in \langle A \cup D \rangle.
	\]
	Thus, $r+M \subseteq \langle A \cup D \rangle$, and so $M$ is nearly atomic. Finally, $M$ is not atomic: if $q \in Q_{>0}$, then $q \notin \langle A \cup D \rangle$ by the linear independence of $\{1,r\} \cup A$ over $\qq$, and therefore $q$ cannot be written as a sum of atoms of~$M$.
\end{proof}

We are in a position to prove that $R$ is a nearly atomic integral domain that is not atomic.

\begin{theorem} \label{thm:NA domain not atomic}
	The integral domain in~\eqref{eq:NA domain R} is nearly atomic but not atomic.
\end{theorem}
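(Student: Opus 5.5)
The plan is to transport everything through the degree map $\deg \colon R \setminus \{0\} \to M$, which is a monoid homomorphism because $\deg fg = \deg f + \deg g$ in $\zz[N]$, and then use Lemma~\ref{lem:aux nearly atomic non-atomic monoid}. First I will pin down the units. Every element of $R$ lies in $\zz[N]$, and $N = Q \oplus E$ is reduced by Lemma~\ref{lem:M=Q+E=<A U B>}. Hence the units of $\zz[N]$ are $\pm 1$, so $R^\times = \{\pm 1\}$. Moreover, a nonzero $g \in R$ with $\deg g = 0$ is an integer constant, since $N \subseteq \rr_{\ge 0}$ forces $\text{ord}\, g = \deg g = 0$.

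\emph{Irreducibility criterion.} Let $g \in R$ be such that $\deg g \in \mathcal{A}(M)$ and no integer $c \neq \pm 1$ divides $g$ in $R$. Then $g$ is irreducible in $R$. Indeed, if $g = g_1 g_2$, then $\deg g_1 + \deg g_2$ is an atom of the reduced monoid $M$, so one factor has degree $0$. That factor is an integer dividing $g$, hence equals $\pm 1$. Every rational prime $p$ is irreducible in $R$. It is not a unit, because $1/p \notin \zz[N]$. Any factorization $p = g_1 g_2$ has $\deg g_1 = \deg g_2 = 0$, so both factors are integers, and one of them is $\pm 1$.

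\emph{Near atomicity, with $s = x^r$.} Fix a nonzero $f \in R$; I will show $x^r f$ is atomic. Choose $n$ with $f \in R_n$, and $k$ with $f = f_{n,k}$. Then
\[
x^r f = x^{e(n+1,k)} \cdot f x^{r - e(n+1,k)},
\]
and both factors lie in $R_{n+1}$. Their degrees $e(n+1,k) \in A$ and $d(n+1,k) \in D$ are atoms of $M$ by Lemma~\ref{lem:aux nearly atomic non-atomic monoid}(1). The monomial $x^{e(n+1,k)}$ has no integer divisor $c \neq \pm 1$ in $R \subseteq \zz[N]$, so it is irreducible by the criterion. For $W := f x^{r-e(n+1,k)}$, suppose $W = cW'$ with $c \in \zz$. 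Then $c$ divides every coefficient of $W$, so $|c|$ is bounded by the content of $W$. Pick such a $c$ with $|c|$ maximal. Then $W'$ has the same atom degree and no nontrivial integer divisor in $R$, so $W'$ is irreducible. Writing $c$ as a product of primes, each irreducible, gives a factorization of $W$ into irreducibles. Hence $x^r f$ is atomic. The main obstacle is exactly this integer-content subtlety: an element whose degree is an atom need not be irreducible. The bounded-content peeling argument above is how I plan to handle it.

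\emph{Not atomic.} Take $q \in Q_{>0}$ and suppose $x^q = g_1 \cdots g_m$ with each $g_i$ irreducible in $R$. Since $0 \le \text{ord}\, g_i \le \deg g_i$ and
\[
\sum_i \text{ord}\, g_i = q = \sum_i \deg g_i,
\]
each $g_i$ is a monomial $\pm x^{q_i}$ with $\sum_i q_i = q$. By Lemma~\ref{lem:M=Q+E=<A U B>}(1), $Q$ is divisor-closed in $N$, so each $q_i \in Q$. No $g_i$ is a unit, so each $q_i > 0$. But then $x^{q_i} = x^{q_i/2} \cdot x^{q_i/2}$ with $x^{q_i/2} \in R_0$ a nonunit, which contradicts the irreducibility of $g_i$. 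Therefore $R$ is not atomic.
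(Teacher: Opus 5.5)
Your proposal is correct and follows the paper's proof step for step: $R^\times=\{\pm 1\}$, elements of $R$ whose degree is an atom of $M$ are atomic, the factorization $x^r f = x^{e(n+1,k)}\cdot f x^{r-e(n+1,k)}$ gives near atomicity with shift $x^r$, and non-atomicity comes from the fact that every divisor of $x^q$ (with $q \in Q_{>0}$) is of the form $\pm x^{q'}$ with $q' \in Q$, which is the paper's divisor-closed submonoid $\{\pm x^q : q \in Q\}$ argument phrased directly. Two of your steps make explicit what the paper leaves terse: your maximal-integer-content argument, together with your check that rational primes are irreducible in $R$, sharpens the paper's claim that splitting off integer factors terminates, and your order/degree comparison justifies why divisors of a monomial must themselves be monomials.
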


\begin{proof}
	Let $R$ be the integral domain introduced in~\eqref{eq:NA domain R}. Observe that $R$ is the subring of the monoid domain $\zz[N]$ generated by the set $\{x^q : q \in Q\} \cup X \cup Y$, where
	\[
		X := \bigcup_{n \in \nn} X_n = \{x^{e(n,k)}:n,k\in\mathbb{N}\} \quad \text{and}  \quad Y := \bigcup_{n \in \nn} Y_n = \{f_{n,k} x^{r-e(n,k)}:n,k\in\mathbb{N} \}.
	\]
	As we have seen in~\eqref{eq:M def:monoid of leading exponents of R}, the equality $M = Q + \langle A \cup D \rangle$ holds. We have proved in Lemma~\ref{lem:aux nearly atomic non-atomic monoid} that $M$ is a reduced nearly atomic monoid with shift $r$ such that $\mathcal{A}(M) = A \cup D$. We verify that $R^\times = \{\pm 1\}$. it is clear that $\{\pm 1\} \subseteq R^\times$ and from the fact that $N$ is reduced, we infer that $R^\times = \{\pm 1\}$: indeed, the fact that $R$ is a subring of the monoid domain $\zz[N]$ guarantees that
	\[
		R^\times \subseteq \zz[N]^\times = \{\pm x^u : u \in \uu(N) \,\} = \{\pm 1\}.
	\]
	We shall also use the following observation. If $g \in R$ is nonzero and $\deg g \in \mathcal{A}(M)$, then $g$ is atomic in $R$. Indeed, any proper factorization of $g$ in $R$ has one factor of degree $0$, and every degree-$0$ element of $R$ belongs to $\zz$; hence repeated proper factorizations of $g$ can only split off nonunit integer factors, and this process terminates by the usual factorization in $\zz$.
	
	We are in a position to argue that $R$ is nearly atomic with shift $x^r$ but not atomic. To prove that $R$ is nearly atomic, take a nonzero $f \in R$, and let $m \in \nn_0$ be an index such that $f \in R_m$. Then $f = f_{m,k}$ for some $k \in \nn$ and, therefore,
	\[
		x^r f = x^{e(m+1,k)} \big(f x^{r-e(m+1,k)}\big).
	\]
	The first factor has degree $e(m+1,k) \in A$, while the second factor has degree $d(m+1,k) \in D$. Since $A \cup D = \mathcal{A}(M)$, the observation in the previous paragraph shows that both factors are atomic in $R$. Hence every nonzero element of $x^rR$ is atomic, and so $R$ is nearly atomic.

	Finally, let us verify that $R$ is not atomic. Observe that $S := \{\pm x^q : q \in Q\}$ is a non-atomic submonoid of the multiplicative monoid $R^*$ because the reduced monoid of $S$ is naturally isomorphic to $Q$. Therefore proving that $R$ is not atomic amounts to arguing that $S$ is a divisor-closed submonoid of $R^*$. To do so, take $g,h \in R^*$ such that $g h \in S$. Then $g$ and $h$ are monomial expressions and their coefficients must belong to $\{\pm 1\}$, whence $g$ and $h$ are monomials of $\zz[N]$ up to sign. Since $\deg \, g + \deg \, h$ belongs to $Q$, the fact that $Q$ is a divisor-closed submonoid of $N = Q \oplus E$ ensures that $\deg \, g, \deg \, h \in Q$, whence $g,h \in S$. Hence $S$ is a divisor-closed non-atomic submonoid of $R^*$, and so the integral domain $R$ is not atomic.
\end{proof}

The construction above takes place inside the monoid algebra $\zz[N]$, but the resulting integral domain is not itself a monoid algebra: it is a recursively generated subring whose generators are chosen so that multiplication by $x^r$ forces atomic factorizations. As a result, the construction leaves open whether the same phenomenon can occur in the class of monoid algebras. Motivated by this and also by the fact that atomicity and factorization in monoid algebras have been the subject of study of several recent papers \cite{BeB22,BGLZ24,GG25}, we pose the following question.

\begin{question} \label{q:monoid domain NA not atomic}
	Does there exist an integral domain $R$ and a torsion-free (cancellative and commutative) monoid $M$ such that the monoid domain $R[M]$ is nearly atomic but not atomic?
\end{question}

\bigskip
%%%%%%%%%%%%%%%%%%%%%%%%%%%%%%%%%%%%%%%%%%%%%%%%%%%%%%%%%%%%%%%%%
%%%%%%%%%%%%%%%%%%%%%%%%%%%%%%%%%%%%%%%%%%%%%%%%%%%%%%%%%%%%%%%%%
\section{A Characterization of the Finite Factorization Property}
\label{sec:a characterization of FFDs via near atomicity}

It is well known that an integral domain has the FF property if and only if it is an atomic IDF domain. This equivalence was first established by Anderson, Anderson, and Zafrullah \cite[Theorem~5.1]{AAZ90} in their seminal paper introducing the FF property (and the BF property), and it was extended to the larger class of monoids two years later by Halter-Koch~\cite{fHK92}, who also proved that for a reduced atomic monoid, the FF property is equivalent to the condition that every element has only finitely many divisors~\cite[Theorem~2]{fHK92}.

Our next goal is to strengthen this characterization by proving that every nearly atomic IDF domain has the FF property. In order to prove this, it is convenient to assume that the multiplicative monoid of the corresponding integral domain is reduced, which we can do by virtue of the following lemma.

\begin{lemma} \label{lem:near atomicity to reduced monoids}
	A monoid $M$ is nearly atomic if and only if its reduced monoid $M/M^\times$ is nearly atomic.
\end{lemma}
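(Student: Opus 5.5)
The plan is to compare atomicity of elements in $M$ and in $M_{\text{red}} := M/M^\times$ through the canonical projection $\pi\colon M \to M_{\text{red}}$, $m \mapsto mM^\times$, and then push the witnessing principal ideal across in each direction. The key preliminary fact is the following: an element $a \in M$ is atomic in $M$ if and only if $\pi(a)$ is atomic in $M_{\text{red}}$.

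The first step is to prove this preliminary fact. I first check that $\pi$ restricts to a bijection $\mathcal{A}(M)/M^\times \to \mathcal{A}(M_{\text{red}})$. Suppose $a \in M$ and $\pi(a) = \pi(b)\pi(c)$. Then $a = bcu$ for some $u \in M^\times$, so $a$ is an atom exactly when $\pi(a)$ is. Here one uses that $b$ is a unit of $M$ if and only if $\pi(b) = 1$; this holds because $M_{\text{red}}$ is reduced and $\pi^{-1}(1) = M^\times$. Units map to the identity, and conversely $\pi(a)=1$ forces $a \in M^\times$. Now take $a = a_1 \cdots a_k$ with each $a_i \in \mathcal{A}(M)$. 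Then $\pi(a) = \pi(a_1)\cdots\pi(a_k)$ is a product of atoms of $M_{\text{red}}$. In the other direction, if $\pi(a) = \pi(a_1)\cdots \pi(a_k)$ with each $\pi(a_i)$ an atom, then $a = u a_1 \cdots a_k$ for some unit $u$. Replacing $a_1$ by its associate $u a_1$, which is still an atom, gives a factorization of $a$ into atoms. The unit case matches $\pi(a) = 1$, which is trivially atomic.

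For the forward direction, suppose every element of $sM$ is atomic in $M$. Each element of $\pi(s)M_{\text{red}}$ has the form $\pi(sm) = \pi(s)\pi(m)$, and by the preliminary fact it is atomic because $sm$ is. Hence $M_{\text{red}}$ is nearly atomic with witness $\pi(s)$.

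For the converse, suppose every element of $\bar s M_{\text{red}}$ is atomic, and choose $s \in M$ with $\pi(s) = \bar s$, which exists since $\pi$ is surjective. For any $m \in M$ we have $\pi(sm) = \bar s\,\pi(m) \in \bar s M_{\text{red}}$, so $\pi(sm)$ is atomic, and therefore $sm$ is atomic in $M$ by the preliminary fact. Thus $M$ is nearly atomic with witness $s$. No step here is a real obstacle. The only delicate point is the preliminary fact, and specifically absorbing the unit $u$ into one atom (and the empty-product/unit case), which relies on $\pi^{-1}(1) = M^\times$.
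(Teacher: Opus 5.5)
Your proposal is correct and follows essentially the same route as the paper. The paper also first proves, as a claim, that $b$ is atomic in $M$ if and only if $\pi(b)$ is atomic in $M/M^\times$, absorbing the unit into one atom. It then transfers the shift forward as $\pi(s)$ and backward through a preimage $s$ of the shift in $M/M^\times$.
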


\begin{proof}
	Let $\pi \colon M \to M/M^\times$ denote the quotient homomorphism, and observe that $a \in M$ is an atom if and only if $\pi(a)$ is an atom. We claim that $\pi$ also preserves atomic elements.
	\smallskip

	\noindent \textsc{Claim.} An element $b \in M$ is atomic if and only if $\pi(b)$ is atomic in $M/M^\times$.
	\smallskip
	
	\noindent \textsc{Proof of Claim.} Fix $b \in M$. For the direct implication, assume that $b$ is atomic. If $b \in M^\times$ then $\pi(b) = M^\times$, which is the identity element and so an atomic element. We assume therefore that $b \in M \setminus M^\times$ and write $b = a_1 \cdots a_n$ for $n \in \nn$ and $a_1, \dots, a_n \in \mathcal{A}(M)$. Then $\pi(b) = \pi(a_1) \cdots \pi(a_n)$, and so $\pi(b)$ is atomic in $M/M^\times$. Conversely, assume that $\pi(b)$ is atomic in $M/M^\times$. If $\pi(b) = M^\times$ then $b \in M^\times$ and so $b$ is atomic in $M$. We assume, therefore, that $\pi(b) \neq M^\times$ and write $\pi(b) = \pi(a_1) \cdots \pi(a_n)$ for some $n \in \nn$ and $a_1, \dots, a_n \in \mathcal{A}(M)$. Then $\pi(b) = \pi(a_1 \dots a_n)$, and so we can take $u \in M^\times$ such that $b = (ua_1)a_2 \cdots a_n$, which implies that $b$ is atomic in $M$. The claim is now established.
	\smallskip

	%First observe that, for each $a \in M$, the element $a$ is atomic in $M$ if and only if $\pi(a)$ is atomic in $M/M^\times$. Indeed, atoms of $M$ map to atoms of $M/M^\times$, and if $\pi(b)$ is an atom of $M/M^\times$, then $b$ is an atom of $M$: any factorization $b=cd$ in $M$ induces a factorization $\pi(b)=\pi(c)\pi(d)$, so one of $\pi(c)$ and $\pi(d)$ is a unit, whence one of $c$ and $d$ is a unit in $M$.
	
	Now suppose that $M$ is nearly atomic, and let $s \in M$ be a shift such that every element of $sM$ is atomic in $M$. Then every element of $\pi(s)(M/M^\times)$ has the form $\pi(sm)$ for some $m \in M$ and so must be atomic by the established claim. Hence $M/M^\times$ is nearly atomic. Conversely, suppose that $M/M^\times$ is nearly atomic, and choose $s \in M$ such that every element of $\pi(s)(M/M^\times)$ is atomic. For each $m \in M$, the fact that $\pi(sm) = \pi(s)\pi(m) \in \pi(s)(M/M^\times)$ guarantees that $\pi(sm)$ is atomic in $M/M^\times$, which implies that $sm$ is atomic in $M$ by the established claim. Hence $M$ is nearly atomic.
\end{proof}

We are in a position to prove our last main result.

\begin{theorem} \label{thm:a characterization of FFDs via near atomicity}
	For an integral domain $R$, the following conditions are equivalent.
	\begin{enumerate}
		\item[(a)] $R$ is an FFD.
		\smallskip
		
		\item[(b)] $R$ is a nearly atomic IDF domain.
	\end{enumerate}
\end{theorem}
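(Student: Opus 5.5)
The implication (a)$\Rightarrow$(b) is immediate. By the Anderson--Anderson--Zafrullah characterization \cite[Theorem~5.1]{AAZ90}, every FFD is atomic and IDF, and every atomic domain is nearly atomic. The content lies in (b)$\Rightarrow$(a). Again by \cite[Theorem~5.1]{AAZ90}, it suffices to show that a nearly atomic IDF domain $R$ is atomic. I will work in the multiplicative monoid $R^*$, or rather in its reduced monoid $H := R^*/R^\times$. By Lemma~\ref{lem:near atomicity to reduced monoids}, $H$ is nearly atomic, and the IDF property clearly passes to $H$, since atoms of $R^*$ correspond to atoms of $H$ and associate classes become equalities. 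Atomicity also transfers back to $R^*$ by the Claim in that lemma. So the plan is to prove that a reduced, nearly atomic IDF monoid $H$ is atomic.

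Fix $s \in H$ such that every element of $sH$ is atomic, and let $b \in H$ be arbitrary. Then $sb$ is atomic. The key idea is to exploit the factorizations of the elements $s^n b$, all of which are atomic. First I will note that $s$ itself is atomic, because $s = s\cdot 1 \in sH$. By the IDF property, the set $\mathcal{A}_b$ of atoms dividing $s^{n}b$ for a fixed $n$ is finite. I would rather use a single element, so I will consider the atoms dividing $sb$. Every factorization of $sb$ uses only atoms from the finite set $\mathcal{A}(sb)$ of atom divisors of $sb$.

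The main step is the following. Write $s = c_1\cdots c_\ell$ as a product of atoms, and write $sb = a_1 \cdots a_m$. I want to cancel the $c_i$'s from some factorization of $sb$. My approach is to prove that $H$ satisfies a local finiteness property. Every divisor of $sb$ is of the form $s\cdot(\text{divisor of } b)$ times something, which is not directly useful. Instead I will use the elements $s^k b$ for all $k$. Each of these is atomic, and its factorizations involve only the finitely many atoms dividing $s^k b$. To control these uniformly, I plan to show directly that every element $x \in sH$ has only finitely many factorizations. Every divisor of $x$ that lies in $sH$ is atomic, and $x$ has finitely many atom divisors, so an ACCP-type argument is available. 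Suppose instead that some $y \mid x$ is not atomic. Then $sy$ is atomic and divides $sx$, and so it is a product of atoms from the finite set of atom divisors of $sx$.

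The obstacle is to extract an atomic factorization of $b$ itself from those of $sb$. I will attempt this by contradiction. Suppose $b$ is not atomic. Then $b \ne 1$ and $b$ is not an atom, so $b = b_1 b_1'$ with both factors nonunits, and we may take $b_1$ non-atomic. Iterating gives a strictly descending divisor chain $b = b_0, b_1, b_2, \dots$ of non-atomic elements, each properly dividing the previous one, with $b_{i-1} = b_i b_i'$ and $b_i' \ne 1$. Multiplying by $s$, every $sb_i$ and every $b_i'$-piece divides $sb$. Each $sb_i$ is atomic and is a proper divisor of $sb_{i-1}$. Since $sb$ has only finitely many atom divisors, call them $\alpha_1,\dots,\alpha_t$, every $sb_i$ is a product $\alpha_1^{k_{i1}}\cdots\alpha_t^{k_{it}}$.

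The plan here is to bound the lengths of factorizations of divisors of $sb$ via a valuation-like argument in the finitely generated submonoid $\langle \alpha_1,\dots,\alpha_t\rangle$, which is a finitely generated reduced monoid. In such a monoid, Dickson's lemma applied to the exponent vectors $(k_{ij})$ yields $i<j$ with $k_{i\cdot}\le k_{j\cdot}$ componentwise. Then $sb_i \mid sb_j$, so $b_i \mid b_j$. Combined with $b_j \mid b_i$ properly, this contradicts the fact that $H$ is reduced and cancellative. This Dickson-lemma step, which turns the finiteness of atom divisors into a contradiction with the infinite proper chain, is the crux, and I expect it to be the main point to get right. It yields that $b$ is atomic, hence $H$ is atomic, and so $R$ is an FFD.
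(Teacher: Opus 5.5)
Your proof is correct once the exploratory fragments are removed. The remarks about atoms dividing $s^nb$, the plan to show that every element of $sH$ has finitely many factorizations, the element $sy$, and the ``valuation-like argument'' play no role. What remains is the infinite chain $b_0, b_1, \dots$ of non-atomic elements together with the Dickson step, and that step works. Fix one exponent vector for each $sb_i$ over the finitely many atoms $\alpha_1, \dots, \alpha_t$ dividing $sb$. Dickson's lemma gives $i<j$ with $sb_i \mid_H sb_j$, so $b_i \mid_H b_j$ by cancellation. This forces $b'_{i+1}\cdots b'_j = 1$ in the reduced monoid $H$, a contradiction.

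The skeleton matches the paper's: after multiplying by the shift $s$, an infinite proper divisor chain is trapped in the submonoid generated by the finitely many atoms dividing one fixed element. The execution differs in two ways.
\begin{enumerate}
\item The paper proves the ACCP rather than atomicity. It takes an ascending chain $a_nM \subsetneq a_{n+1}M$, writes $s^2a_1=(sa_n)(sb_n)$, and works in the submonoid $M_1$ generated by the atoms dividing $s^2a_1$. It then cites Halter-Koch's theorem that elements of a reduced atomic IDF monoid have only finitely many divisors, and passes through ACCP $\Rightarrow$ atomic to the atomic-plus-IDF characterization.
\item You argue directly against non-atomicity and replace the cited theorem by Dickson's lemma. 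Because Dickson's lemma yields $sb_i \mid_H sb_j$ directly in $H$, one factor of $s$ suffices. The paper's second factor of $s$ exists only so that the cofactor $sb_n$ is atomic and the divisibility $sa_n \mid s^2a_1$ happens inside $M_1$.
\end{enumerate}
Your route is more elementary and self-contained. Note also that the paper's $M_1$ is finitely generated by the IDF property, so its finiteness input is really of the same Dickson type. The paper's route records the formally stronger intermediate conclusion that the monoid satisfies the ACCP. Your argument would give that verbatim if run on an ascending chain of principal ideals instead of a chain of non-atomic divisors.
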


\begin{proof}
	Since the integral domain $R$ is nearly atomic (resp., atomic, an IDF domain, an FFD) if and only if its multiplicative monoid $R^*$ satisfies the corresponding property as a monoid, it suffices to let $M$ denote $R^*$ and prove that $M$ is an FFM if and only if $M$ is a nearly atomic IDF monoid. In addition, it is well known that $M$ is atomic (resp., an IDF monoid, an FFM) if and only if its reduced monoid $M/M^\times$ is atomic (resp., an IDF monoid, an FFM), and it follows from Lemma~\ref{lem:near atomicity to reduced monoids} that $M$ is nearly atomic if and only if its reduced monoid is nearly atomic. Thus, after replacing $M$ by its reduced monoid if necessary, we can assume that $M$ is reduced.
	\smallskip
	
	(a) $\Rightarrow$ (b): Suppose that $M$ is an FFM. By definition, $M$ is atomic and, therefore, $M$ is nearly atomic. In addition, it follows from \cite[Corollary~2]{fHK92} that $M$ is an IDF monoid.
	\smallskip
	
	(b) $\Rightarrow$ (a):
	Assume now that $M$ is a nearly atomic IDF monoid. Since $M$ is nearly atomic, there exists a shift $s \in M$ such that every element of the principal ideal $sM$ is atomic. We first prove the following claim.
	\smallskip
	
	\noindent \textsc{Claim.} The monoid $M$ satisfies the ACCP.
	\smallskip
	
	\noindent \textsc{Proof of Claim.} Suppose, by way of contradiction, that $M$ violates the ACCP. Then there exists an infinite sequence $(a_n)_{n \ge 1}$ of elements of $M$ such that $a_n M \subsetneq a_{n+1}M$ for every $n \in \mathbb N$. For each $n \in \mathbb N$, the fact that $a_1 \in a_1M \subseteq a_nM$ allows us to pick $b_n \in M$ such that $a_1 = a_n b_n$ and then write
	\begin{equation} \label{eq:temp}
		s^2a_1=(sa_n)(sb_n), %\quad
		%s^2a_1 = (sa_n)\left(\frac{sa_1}{a_n}\right),
	\end{equation}
	where $s^2a_1$ and both factors on the right-hand side belong to $sM$ and are therefore atomic. We now consider the submonoid of $M$ generated by the atoms of $M$ that divide $s^2 a_1$:
	\[
		M_1 := \langle a \in \mathcal{A}(M) : s^2a_1M \subseteq aM \rangle.
	\]
	Since $M_1$ is generated by atoms of $M$ (which remain atoms in $M_1$), it follows that $M_1$ is an atomic monoid. In addition, as the inclusion $\mathcal{A}(M_1) \subseteq \mathcal{A}(M)$ holds, for each $b \in M_1$, every atom of $M_1$ that divides $b$ in $M_1$ is an atom of $M$ that divides $b$ in~$M$. Thus, the fact that $M$ is an IDF monoid implies that
	\[
		|\{a \in \mathcal{A}(M_1) : bM_1 \subseteq aM_1 \}| \le |\{a \in \mathcal{A}(M) : bM \subseteq aM \}| < \infty.
	\]
	As a consequence, $M_1$ is an atomic IDF reduced monoid, and this implies that each element of $M_1$ has only finitely many divisors in $M_1$~\cite[Theorem~2]{fHK92}. Since every atomic divisor of $s^2a_1$ in $M$ belongs to $M_1$, it follows from~\eqref{eq:temp} that $sa_n,sb_n \in M_1$ for every $n \in \nn$. By cancellativity, the terms of the sequence $(sa_n)_{n \ge 1}$ are pairwise distinct. As we can write $s^2a_1=(sa_n)(sb_n)$ with both factors in $M_1$, the underlying set of $(sa_n)_{n \ge 1}$ is an infinite set of divisors of $s^2a_1$ in $M_1$. However, this contradicts the fact that each element of $M_1$ has only finitely many divisors. Hence $M$ satisfies the ACCP.
	\smallskip
	
	As $M$ satisfies the ACCP, $M$ must be an atomic monoid. Since $M$ is also an IDF monoid, it follows from the classical characterization of the FF property \cite[Corollary~2]{fHK92} that $M$ is an FFM, which concludes the proof.
\end{proof}

With notation as in Theorem~\ref{thm:a characterization of FFDs via near atomicity}, if one replaces near atomicity by almost atomicity in condition~(b) then the resulting statement is no longer equivalent to condition~(a). We conclude with an example that illustrates this observation.

\begin{example}
	Let $M = (\nn_0 \times \nn_0) \cup (\zz \times \nn_{\ge 2})$, and consider the monoid algebra $R = \rr[X;M]$. Since the Grothendieck group of $M$ is $\zz^2$, we identify the group algebra $\rr[\zz^2]$ with the Laurent polynomial domain $\rr[x^{\pm 1}, y^{\pm 1}]$ in the variables $x$ and $y$, writing $x^my^n$ in place of $X^{(m,n)}$ for every pair $(m,n) \in \zz^2$. Obesrve that, under this identification, $R$ is an intermediate domain of the ring extension $\rr[x,y] \subset \rr[x^{\pm 1}, y^{\pm 1}]$. On the other hand, every polynomial expression $f \in R$ can be multiplied by a monomial $x^k$, for a suitable $k \in \nn_0$, so that $x^k f$ belongs to the UFD $\rr[x,y]$. Let us prove the following claim.
	\smallskip
	
	\noindent \textsc{Claim 1.} $R$ is an IDF domain.
	\smallskip
	
	\noindent \textsc{Proof of Claim 1.} Fix $f \in R$, and choose $a \in \mathbb N_0$ such that $g := x^a f \in \mathbb R[x,y]$. Let $h \in R$ be an irreducible divisor of $f$. If $h \ne x$ then there exists $b \in \mathbb N_0$ such that $x^b h \in \mathbb R[x,y]$ and $x \nmid_{\mathbb R[x,y]} x^b h$. Also,
	\[
		x^b h \mid_{\mathbb R[x,y]} x^{\max\{0, b-a\}} g.
	\]
	Since $x^b h$ is not divisible by~$x$, it must divide $g$. As $g$ has a unique factorization in $\mathbb R[x,y]$, there are only finitely many possibilities for $x^b h$, and each such possibility determines at most one irreducible $h$. Thus, $f$ is divisible by only finitely many irreducibles of $R$. Hence $R$ is an IDF domain, and Claim~1 is established.
	\smallskip
	
	Let $g$ be an irreducible polynomial in~$\mathbb R[x,y]$ for which there exists a largest nonnegative integer $a$ such that $g/x^a \in R$. Assume, in addition, that $g/x^a$ is not a unit of~$R$.
	\smallskip

	\noindent \textsc{Claim 2.} $g/x^a \in \mathcal{A}(R^*)$.
	\smallskip
	
	\noindent \textsc{Proof of Claim 2.} Set $f := g/x^a$. By the maximality of $a$, we see that $x \nmid_R f$. Suppose that $f$ can be factored into two nonunits of~$R$:
	\[
	 	f = \frac{g}{x^a} = \frac{g_1}{x^{k_1}} \cdot \frac{g_2}{x^{k_2}},
	\]
	where $g_1, g_2 \in \mathbb R[x,y]$ and $k_1, k_2 \in \mathbb N_0$. We may assume that $x \nmid_{\mathbb R[x,y]} g_i$ for $i \in \{1,2\}$ since otherwise we can cancel a power of $x$ from $g_i$ and decrease $k_i$. If $a > k_1 + k_2$ then
	\[
		g = g_1g_2x^{a-k_1-k_2},
	\]
	which contradicts the fact that $g$ is irreducible. If $a = k_1 + k_2$ then $g = g_1g_2$. Since $g$ is irreducible, either $g_1$ or $g_2$ is a unit of $\mathbb R[x,y]$. Without loss of generality, assume that $g_1 = 1$. From the fact that
	\[
		\frac{g_1}{x^{k_1}} = \frac{1}{x^{k_1}} \in R,
	\]
	we obtain the equality $k_1 = 0$. Hence $g_1/x^{k_1} = 1$ is a unit of~$R$, a contradiction. Finally, if $a < k_1 + k_2$ then
	\[
		gx^{k_1+k_2-a} = g_1g_2,
	\]
	which is impossible because $x \nmid_{\mathbb R[x,y]} g_i$ for $i \in \{1,2\}$. In all cases, we obtain a contradiction. Hence $f$ is irreducible, and Claim~2 is established.
	\smallskip
	
	We proceed to argue that $R$ is almost atomic. To do so, take any nonzero $f \in R$ and choose a minimal $a \in \mathbb N_0$ such that $g = x^a f \in \mathbb R[x,y]$. Consider the unique factorization
	\[
		g = \prod_{j=1}^\ell g_j
	\]
	of $g$ into (not necessarily distinct) irreducibles $g_1, \dots, g_\ell \in \mathbb R[x,y]$. For each $j \in \ldb 1,\ell \rdb$, let $a_j$ be the largest nonnegative integer such that $g_j/x^{a_j} \in R$. By Claim~2, each nonunit among the elements $g_j/x^{a_j}$ is an irreducible of~$R$. Then
	\[
		x^a f = \prod_{j=1}^\ell g_j = \prod_{j=1}^\ell x^{a_j} \cdot \prod_{j=1}^\ell \frac{g_j}{x^{a_j}}
	\]
	is a factorization of $x^a f$ into irreducibles of~$R$ after expanding the powers $x^{a_j}$ and omitting any unit factors. Hence $f$ is almost atomic and, as $f$ was taken to be an arbitrary element in $R^*$, we conclude that $R$ is an almost atomic domain.
	
	Finally, we show that $R$ is not atomic. Consider any monomial $f$ of $R$ with negative $x$-degree. Any factorization of $f$ in $R$ must be a factorization into monomials. However, the only irreducible monomials of $R$ are $x$ and $y$, and no product of these monomials has negative $x$-degree. Hence $f$ does not factor into irreducibles in $R$, and so $R$ is not atomic. Therefore $R$ is an almost atomic IDF domain that is not atomic, and so~$R$ cannot be an FFD.
	\hfill $\blacksquare$
\end{example}

\bigskip
%%%%%%%%%%%%%%%
%%%%%%%%%%%%%%%
\section*{Acknowledgments}

The collaboration leading to this paper took place as part of CrowdMath, a year-long online math research program hosted
by MIT-PRIMES and the AoPS. The authors thank both MIT-PRIMES and AoPS for making possible this collaboration. The second author kindly acknowledges support from NSF under the award DMS-2213323.

\bigskip
%%%%%%%%%%%%%%%%%%%%%%%%%%%%%%%%%%%%%%%%%
%%%%%%%%%%%%%%%%%%%%%%%%%%%%%%%%%%%%%%%%%
\section*{Conflict of Interest Statement}

On behalf of all authors, the corresponding author states that there is no conflict of interest related to this paper.

\bigskip
%%%%%%%%%%%%%%
%%%%%%%%%%%%%%

\end{document}